\newtheorem{theorem}{Theorem}
\newtheorem{definition}[theorem]{Definition}
\newcommand{\BZ}{{\mathbb{Z}}}
\newcommand{\BN}{{\mathbb{N}}}
\newcommand{\BC}{{\mathbb{C}}}
\newcommand{\act}{\curvearrowright}
\newcommand{\gD}{\Delta}
\newcommand{\gd}{\delta}
\newcommand{\gb}{\beta}
\newcommand{\gC}{\Gamma}
\newcommand{\gc}{\gamma}
\newcommand{\gS}{\Sigma}
\newcommand{\gO}{\Omega}
\newcommand{\ga}{\alpha}
\newcommand{\GL}{\text{GL}}
\newcommand{\SO}{\text{SO}}
\newtheorem{prop}{Proposition}[section]
\newtheorem{thm}[prop]{Theorem}
\newtheorem{lem}[prop]{Lemma}
\newtheorem{cor}[prop]{Corollary}
\theoremstyle{definition}
\newtheorem{rem}[prop]{Remark}
\long\def\@savemarbox#1#2{\global\setbox#1\vtop{\hsize\marginparwidth 
  \@parboxrestore\tiny\raggedright #2}}
\begin{document}
\author{Tsachik Gelander}

\address{Mathematics and Computer Science\\
Weizmann Institute\\
Rechovot 76100, Israel\\}
\email{tsachik.gelander@gmail.com}

\date{\today}


\title{Convergence groups are not invariably generated}

\maketitle

\begin{abstract}
It was conjectured in \cite{KLS} that non-elementary word hyperbolic groups are never invariably generated. We show that this is indeed the case even for the much larger class of convergence groups. 
\end{abstract}

\section{introduction}

J. Wiegold \cite{W76} showed that non-abelian free groups are not invariably generated. Kantor, Lubotzky and Shalev \cite{KLS} conjectured (in the form of a question) that every non-elementary Gromov hyperbolic group is non-invariably-generated. We show that this is indeed the case for the much wider class of convergence groups. We prove that every non-elementary torsion-free convergence group admits a proper (infinite rank) free subgroup that meets all the conjugacy classes. For convergence groups with torsion, we obtain a similar result, where the free subgroup is replaced by (an infinite) free product of cyclic groups, and we sometimes have to mod out by a finite normal subgroup.


\subsection{Invariable generation} 
Recall that a subset $S$ of a group $G$ {\it invariably generates} $G$ if $G= \langle s^{g(s)} | s \in S\rangle$ for every choice of $g(s) \in G,s \in S$. One says that a group $G$ is {\it invariably generated}, or shortly IG, if such $S$ exists, or equivalently if $S=G$ invariably generates $G$. 
Equivalently, $G$ is IG iff no proper subgroup meets every conjugacy class, or in other words, iff every transitive permutation representation on a non-singleton set admits a fixed-point-free element. 

A well known simple counting argument shows that every finite group is IG. Obviously, abelian groups are IG. More generally J. Wiegold \cite{W76} showed that the class of IG groups is closed under extensions, hence contains all virtually solvable groups. Clearly this class is also closed to quotients.
On the other hand the class IG is not closed under direct union; for instance the group of finitely supported permutations of $\BN$ is clearly not IG, since every element admits a conjugate whose support is contained in $2\BN$. Moreover, Wiegold \cite{W77} gave an example of an IG group whose commutator is not IG, proving in particular that the class IG is not subgroup closed. 

Let us list some classical examples of non-IG groups:
\begin{itemize}
\item Groups with a single non-trivial conjugacy class are non-IG (cf. \cite{HNN,Osin}). 

\item $G=\SO(3)$. To see that consider the standard rotation action of $G$ on the sphere $S^2$, and recall that every element fixes some point (and its antipode). 
More generally, every nonabelian connected compact group is non-IG, since every abelian subgroup is contained in a maximal one and all maximal abelian subgroups are conjugated.

\item $G=\GL_n(\BC),~n\ge 2$. Indeed every matrix can be conjugated to the Borel subgroup $B$ of upper triangular matrices. More generally, by \cite[Proposition 2.4]{KLS} a non-virtually-solvable linear algebraic group over an algebraically closed field is never IG.  
\end{itemize}

%
%
%
\medskip


In \cite{W76}, Wiegold proved that the free group on two (or more) letters $F_{\{ a,b,\ldots\}}$ is non-IG by producing a list $L$ of conjugacy class representatives which are jointly independent. To recall his construction, let $\{ w_n\}$ be conjugacy class representatives which start and end with a non-zero power of $b$, then take $L=\{w_n^{a^n}:n\in\BN\}$. 

W. M. Kantor, A. Lubotzky and A. Shalev asked in \cite{KLS} (among many other questions) whether every non-elementary Gromov hyperbolic group is non-IG. In this note we treat this problem for the much wider class of convergence groups.

\subsection{Convergence groups} \label{sec:convergence}
Convergence groups were introduced by H. Furstenberg in \cite{Fur} under the name ``Dynkin groups". 
The term {\it convergence groups} 
was given by Gehring and Martin in \cite{GM:Convergence} (who studied properties of
Kleinian groups through their action on the boundary, thus restricted to
convergence group actions on spheres).
These are groups admitting an action on a compact metrizable space $K$ such that the corresponding  action on triples $\{(x_1,x_2,x_3)\in K^3: i\ne j\Rightarrow x_i\ne x_j\}$ is proper (see also the equivalent Definition \ref{defn:convergence} below).
This class includes
Gromov hyperbolic groups
\cite{Tukia:convergence, Tukia:Uniform_convergence_groups, Bowditch:characterisation_hyperbolic}, relatively
hyperbolic groups \cite{Yaman:Relatively_Hyp_Convergence}, as well as any group acting properly discontinuously on a complete locally compact Gromov hyperbolic space, and in particular every non-elementary discrete subgroup of a rank one simple Lie group. Obviously, this class is subgroup closed.  

\medskip


The main purpose of this paper is to prove:

\begin{thm}\label{thm}
Let $\gC$ be a non-elementary convergence group admitting a faithful minimal convergence action. Then $\gC$ has an independent set $I$ consisting of one representative of every non-trivial conjugacy class, which generates a proper subgroup $\langle I\rangle\lneqq \gC$.
\end{thm}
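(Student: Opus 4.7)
The plan is to imitate Wiegold's argument for free groups, replacing his ``conjugation by $a^n$'' with conjugation by large powers of a well-chosen loxodromic element. Since $\gC$ is non-elementary, by Tukia's theorem it contains a loxodromic element $h_0$; fix one with attracting fixed point $q_0 \in K$ and repelling fixed point $p_0 \in K$. Choose a proper open neighborhood $U \ni q_0$ with $p_0 \notin \ol{U}$, and inside $U$ pick a pairwise disjoint family of open sets $Z_1, Z_2, \ldots$ accumulating at $q_0$. Enumerate the non-trivial conjugacy classes of $\gC$ as $C_1, C_2, \ldots$ (using transfinite induction if $\gC$ is uncountable).

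I would construct inductively representatives $g_n \in C_n$ together with open neighborhoods $V_n \subset Z_n$ satisfying the ping-pong condition
\[
g_n^k(K \setminus V_n) \subset V_n \qquad \text{for every nonzero } k \pmod{|g_n|}.
\]
Given $c \in C_n$, the first step is to replace $c$ by a conjugate $c' \in C_n$---using the topological double transitivity of $\gC$ on $K \times K$ off the diagonal, standard for non-elementary minimal convergence actions---such that no nontrivial power $c'{}^k$ fixes $p_0$. The set of ``bad'' conjugating elements is $\bigcup_k \{\gc : \gc^{-1}(p_0) \in \mathrm{Fix}(c^k)\}$; since fixed-point sets of nontrivial elements of a faithful minimal convergence action are nowhere dense in $K$, generic $\gc$ works. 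Then set $g_n := h_0^{N_n} c'\, h_0^{-N_n}$ for a sufficiently large $N_n$. Because $c'{}^k(p_0) \ne p_0$ for every nonzero $k$, the north-south dynamics of $h_0$ force $g_n^k(x) \to q_0$ uniformly on compact subsets of $K \setminus \{q_0\}$ as $N_n \to \infty$; for $N_n$ large enough the fixed points of $g_n$ lie in $Z_n$ and the displayed ping-pong condition holds with $V_n$ a small open neighborhood of them inside $Z_n$. An additional conjugation by an element of $\gC$, again invoking double transitivity, can be used to steer the fixed points of $c'$ into the specific target region that determines which $Z_n$ is used.

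Once $(g_n, V_n)$ are constructed for all $n$, the classical ping-pong lemma for free products shows that $I := \{g_n\}$ is independent and $\langle I \rangle \cong \ast_n \langle g_n\rangle$. Moreover every reduced word in $\langle I\rangle$ applied to a point $x_0 \in K \setminus U$ lands in some $V_{i_1}$, so the limit set of $\langle I \rangle$ is contained in $\ol{\bigcup_n V_n} \subset \ol{U}$. Since $p_0 \notin \ol{U}$ and $p_0$ necessarily lies in the limit set of any subgroup containing the loxodromic $h_0$, it follows that $h_0 \notin \langle I \rangle$, whence $\langle I \rangle \lneqq \gC$.

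The main obstacle I anticipate lies in the torsion case: for a torsion $c$ of order $m$, all $m-1$ nontrivial powers must simultaneously be arranged so that none of them fixes $p_0$, while the resulting ping-pong neighborhood $V_n$ still fits inside the prescribed cell $Z_n$. Both requirements ultimately rely on the interplay between topological double transitivity of $\gC$ and the nowhere-denseness of fixed-point sets of nontrivial elements---features that are the essential dynamical input of non-elementary faithful minimal convergence actions, and where any proof must do its careful work.
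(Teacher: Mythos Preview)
Your broad strategy coincides with the paper's: choose pairwise disjoint open sets, for each conjugacy class produce a representative whose ping-pong dynamics is confined to one of these sets, conclude via a free-product ping-pong lemma, and deduce properness from the limit set being trapped in a proper closed subset. You also correctly single out the two essential dynamical inputs (nowhere-dense fixed-point sets for nontrivial elements, and north--south dynamics). The endgame---ping-pong plus the limit-set containment $L(\langle I\rangle)\subset\overline{\bigcup V_n}$---is exactly what the paper does.

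However, your specific mechanism---Wiegold-style conjugation by large powers of a single loxodromic $h_0$---has two genuine gaps as written.

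\emph{Uniformity in $k$.} For each fixed $k$ it is true that $g_n^k(x)=h_0^{N_n}c'^{\,k}h_0^{-N_n}(x)\to q_0$ as $N_n\to\infty$, uniformly for $x$ in compacta avoiding $q_0$. But for infinite-order $c'$ you need a single $N_n$ that works for \emph{all} $k\ne 0$. This amounts to knowing that the full $\langle c'\rangle$-orbit of $p_0$ stays uniformly away from $p_0$, i.e.\ that $p_0$ has a $c'$-wandering open neighbourhood. That is true (the orbit closure is $\{c'^k(p_0):k\in\BZ\}\cup\{c'^{\pm}\}$, and under your hypothesis $p_0$ equals none of these except itself), but it is not what you asserted, and for parabolic $c'$ one must also invoke that $c'^{-n}$ and $c'^{n}$ share the same attracting point.

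\emph{Steering into the cells $Z_n$.} This is the more serious problem. The fixed points of $g_n$ are $h_0^{N_n}(\mathrm{Fix}(c'))$, which converge to $q_0$; they do not lie in a prescribed $Z_n$ (disjoint from $q_0$) for all large $N_n$, only possibly for isolated values of $N_n$, and you have no control over whether those particular $N_n$ also deliver the ping-pong inclusion. Worse, taking $V_n$ to be a ``small'' neighbourhood of the fixed points makes the condition $g_n^k(K\setminus V_n)\subset V_n$ \emph{harder}, since $K\setminus V_n$ grows as $V_n$ shrinks. The suggested ``additional conjugation'' is too vague: any further conjugation moves the fixed points and simultaneously disturbs the inclusion you are trying to preserve.

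The paper decouples these issues. It first proves directly (the Main Proposition, via a Baire category argument on the sets $A_n=\{x:N(\gc,\overline{A},x)\le n\}$) that every nontrivial $\gc$ admits a $\gc$-wandering open set $O(\gc)$. It then uses a placement lemma (minimality plus a loxodromic with attracting point in $O(\gc)$ and repelling point off a given proper closed set $\Sigma$) to find $\gd$ with $\gd\cdot\Sigma\subset O(\gc)$, so that $\Sigma$ is $\gc^{\gd}$-wandering. Applying this with $\Sigma=X\setminus\gO_n$ for \emph{arbitrary} pairwise disjoint open $\gO_n$ with non-dense union gives the representatives; no relation between the $\gO_n$ and any fixed loxodromic is needed. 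This modular route bypasses both of your gaps; in particular the Baire step replaces your missing uniformity-in-$k$ argument, and the placement lemma replaces your steering step.
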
  

By definition, a set $I$ in a group is {\it independent} if the subgroup $\langle I\rangle$ is the free product of the cyclic groups $\langle\gc\rangle,~\gc\in I$.

The kernel $\ker(\act)$ of a convergence action  $\gC\act X$ is always finite, and in many natural examples it is trivial. Theorem \ref{thm} could be stated without the faithfulness assumption, in which case $I$ would consist of representatives of the conjugacy classes not belonging to the finite group $\ker(\act)$. 

\begin{cor}
Non-elementary convergence group are non-IG.
\end{cor}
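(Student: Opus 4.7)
The plan is to derive the Corollary directly from Theorem~\ref{thm}, together with a short reduction handling the cases where the given convergence action is not faithful or minimal. Once a proper subgroup of $\gC$ meeting every conjugacy class is produced, the characterisation of non-IG already recalled in the introduction yields the statement at once.

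I would begin by reducing to the hypothesis of Theorem~\ref{thm}. Starting from a non-elementary convergence action $\gC\act X$, restrict to the limit set $\Lambda\subset X$; this restricted action is automatically minimal for a non-elementary convergence group, and its kernel $N:=\ker(\gC\act\Lambda)$ is finite and normal, a standard fact about convergence groups recalled in the excerpt. The quotient $\gC/N$ is then a non-elementary convergence group with a faithful minimal action on $\Lambda$, so Theorem~\ref{thm} applied to $\gC/N$ produces an independent set $\bar I\subset\gC/N$ hitting every non-trivial conjugacy class of $\gC/N$ with $\langle\bar I\rangle\lneqq\gC/N$. I would then lift $\bar I$ to arbitrary preimages $I\subset\gC$ and adjoin one representative from each of the (finitely many) non-trivial conjugacy classes of $\gC$ contained in $N$.

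Next, I would verify that $H:=\langle I\cup N\rangle=\langle I\rangle\cdot N$ is a proper subgroup of $\gC$ meeting every conjugacy class. Properness is immediate: the image of $H$ in $\gC/N$ is exactly $\langle\bar I\rangle$, properly contained in $\gC/N$ by Theorem~\ref{thm}, so $H\lneqq\gC$. Meeting every conjugacy class reduces to a short case distinction: if a non-trivial class $C$ lies in $N$ it is met by $I$ by the enlargement step; otherwise $\bar C$ is a non-trivial class in $\gC/N$ and is therefore met by some $\bar x_I\in\bar I$, and its lift $x_I\in I$ satisfies $x_I=cn$ for some $c\in C$ and $n\in N$, whence $c=x_I n^{-1}\in I\cdot N\subseteq H$. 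Choosing a function $g\colon\gC\to\gC$ with $s^{g(s)}\in H$ for every $s$ then produces conjugates generating a subgroup of $H\lneqq\gC$, certifying that $\gC$ is not invariably generated.

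The genuine difficulty lies in Theorem~\ref{thm} itself; the Corollary introduces no essentially new idea beyond this quotient-and-lift argument. The only subtlety worth noting is that after lifting the independent set $\bar I$ and adjoining elements of $N$, the free-product structure of $\bar I$ need not persist in $\gC$; but since only \emph{properness}, and not the internal structure, of the subgroup meeting all conjugacy classes is needed to defeat invariable generation, this is harmless.
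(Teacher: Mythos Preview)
Your proposal is correct and follows the route the paper intends: the paper gives no explicit proof of the Corollary, treating it as immediate from Theorem~\ref{thm} together with the remark that one may drop faithfulness at the cost of missing only the conjugacy classes inside the finite kernel $\ker(\act)$. Your quotient-and-lift argument is exactly the natural way to fill in those details, and your verification that $H=\langle I\rangle\cdot N$ is proper and meets every class is sound (note that the image of a $\gC$-conjugacy class in $\gC/N$ is already a full conjugacy class, so your ``$x_I=cn$ for some $c\in C$'' step is justified).

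One minor remark: since the introduction already records that the class of IG groups is closed under quotients, an even shorter derivation is available---once Theorem~\ref{thm} shows $\gC/N$ is non-IG, the contrapositive of quotient-closure gives that $\gC$ is non-IG, with no need to lift $\bar I$ or adjoin $N$ explicitly. Your explicit construction is of course equivalent and has the merit of actually exhibiting the proper subgroup of $\gC$ meeting all classes.
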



\section{Definitions and useful properties of convergence groups}
We refer to \cite{Bowditch:Convergence} for supplementary background on convergence groups.

\begin{definition}
\label{defn:collapsing} An infinite set $\Phi$ of homeomorphisms
of a compact topological space $X$ is called {\it collapsing} with respect to a
pair of (not necessarily distinct) points $(a,r)$ if for every
pair of compact sets $K \subset X \setminus \{r\}$ and $L \subset
X \setminus \{a\}$, the set $\{\phi \in \Phi : \phi\cdot K \cap L \ne
\emptyset \}$ is finite. We shall then call $a$ the attracting
point and $r$ the repelling point of $\Phi$. A set $\Phi$ is {\it
collapsing} if it is collapsing with respect to some pair of
points.
\end{definition}

\begin{definition}
\label{defn:convergence} An action of a group $\Gamma$ on a compact Hausdorff space $X$ is
said to have the convergence property if every infinite subset $\Phi < \Gamma$ contains an
infinite subset $\Phi' \subset \Phi$ which is collapsing. A {\it convergence group} is a group
that admits a convergence action on some compact metrisable space consisting of more than two points.
\end{definition}

It follows easily from the definition that the kernel of every convergence action is finite. In particular a convergence group with no
finite normal subgroups admits a faithful convergence action. 
It also follows from the definition that every element of infinite order in $\gC$ fixes
either one or two points of $X$. As a Corollary we obtain the ``usual'' classification of elements
into three mutually exclusive categories: elements of finite order are called {\it elliptic},
elements of infinite order fixing exactly one point are called {\it parabolic} and elements of
infinite order fixing two points of $X$ are called {\it loxodromic}. 
We may denote the fixed points of a non-torsion element $g$ by $g^+$ and $g^-$, and think of them as the future limit and past limit, so that the following is satisfied:
\begin{itemize}
\item $\forall x\ne g^-$ we have $g^n\cdot x\to g^+$,
\item $g^+=g^-$ iff $g$ is parabolic.
\end{itemize} 
Moreover, given any pair of neighbourhoods $g^-\in R,g^+\in A$ there is $n_0$ such that for all $n\ge n_0$, $g^n\cdot (X\setminus R)\subset A$.
For $x\ne g^-$ and a set $U\subset X$ containing $g^+$ in its interior, we denote by $N(g,U,x)\in\BN$ the minimal integer such that
$$
 n\ge N(g,U,x)\Rightarrow g^n\cdot x\in U.
$$

A convergence group is called elementary if
it is finite or if it stabilizes a nonempty subset of $X$ with at most $2$ elements.

The {\it limit set} $L(\gC)\subset X$ is characterised as the set of all limit points $\lim \gc_n\cdot x$, where $x\in X$ is constant and $(\gc_n)$ is a sequence of distinct elements in $\gC$, or as the closure of the set $\{ g^+:g\in\gC~\text{non-torsion}\}$, assuming $\gC$ is non-torsion, which is usually the case (see Lemma \ref{lem:lox}). The assumption that $\gC$ is non-elementary is equivalent to $\text{Card}(L(\gC))>2$ in which case $L(\gC)$ is the unique minimal $\gC$-invariant compact set and it is perfect, i.e. has no isolated points. The minimality of $\gC\act L(\gC)$ means that every orbit is dense. Replacing $X$ by $L(\gC)$ we may always suppose that the action of $\gC$ on $X$ is minimal.

%
%
%

The following two lemmas are well known and elementary:

\begin{lem}\label{lem:lox} 
A non-elementary convergence
group contains a loxodromic element. Moreover the set of pairs $\{ (g^-,g^+)\in X^2:g~\text{is loxodromic}\}$ is dense in $L(\gC)^2$.  
\end{lem}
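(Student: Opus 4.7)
The plan is to prove existence of a loxodromic element first, and then obtain density as a quantitative refinement by reapplying the existence argument to cleverly chosen sequences.

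For existence, I would start from any infinite subset of $\gC$ (which exists since $\gC$ is non-elementary, hence infinite) and use the convergence property to extract a collapsing subsequence $(\gc_n)$ with attractor $a$ and repeller $r$. If $a=r$, non-elementarity of $\gC$ produces $h\in\gC$ with $h\cdot a\neq a$, and $(\gc_n h)$ is then collapsing with attractor $a$ and repeller $h^{-1}a\neq a$, so one may assume $a\neq r$. Choose small closed disjoint neighborhoods $A\ni a$ and $R\ni r$, small enough that $X\setminus(A\cup R)\neq\emptyset$ (possible since $L(\gC)$ has more than two points). By the collapsing property, $\gc_n(X\setminus R)\subset A$ for all large $n$; passing to complements yields $\gc_n^{-1}(X\setminus A)\subset R$. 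A point in $X\setminus(A\cup R)$ is then moved into $A$ by every positive power of $\gc_n$, forcing $\gc_n$ to have infinite order. Apply the convergence property once more, this time to the infinite cyclic subgroup $\langle\gc_n\rangle$, and extract a collapsing subsequence $(\gc_n^{k_j})$ with attractor $p^+$ and repeller $p^-$. A short continuity argument---both $\gc_n^{k_j}x$ and $\gc_n^{k_j+1}x=\gc_n(\gc_n^{k_j}x)$ must limit to $p^+$ for a generic $x$---shows that $\gc_n\cdot p^\pm=p^\pm$. The invariances $\gc_n^k(A)\subset A$ and $\gc_n^{-k}(R)\subset R$ for all $k\ge 0$ then put $p^+\in A$ and $p^-\in R$; since $A\cap R=\emptyset$ the two fixed points are distinct, so $\gc_n$ is loxodromic.

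For density, fix $(p,q)\in L(\gC)^2$ and $\varepsilon>0$. Start from any loxodromic $g_0$ produced above. By minimality of $\gC\act L(\gC)$, the orbit $\gC\cdot g_0^+$ is dense in $L(\gC)$, so some conjugate $g_1=hg_0h^{-1}$ satisfies that $g_1^+$ is within $\varepsilon/2$ of $q$. Next, using density of $\gC\cdot g_1^-$ together with perfectness of $L(\gC)$, pick $h'\in\gC$ with $h'^{-1}g_1^-$ within $\varepsilon/2$ of $p$ and distinct from $g_1^+$. Then the sequence $(g_1^N h')_{N\ge 1}$ is collapsing with attractor $g_1^+$ and repeller $h'^{-1}g_1^-$, which are distinct. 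Reapplying the existence argument above to this sequence, for all sufficiently large $N$ the element $g_1^N h'$ is loxodromic with fixed points in prescribed small disjoint neighborhoods of $g_1^+$ and $h'^{-1}g_1^-$; shrinking the neighborhoods places them within $\varepsilon$ of $q$ and $p$ respectively.

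The main obstacle is the core step in the existence part: passing from the collapsing dynamics of a whole sequence to two distinct fixed points of a single element. The essential trick is the double use of the convergence property---first on $\gC$, then inside $\langle\gc_n\rangle$---combined with the nested invariance $\gc_n A\subset A$ and $\gc_n^{-1}R\subset R$, which traps the attractor and repeller of $\gc_n^{k_j}$ in the disjoint sets $A$ and $R$ and thereby separates them, ruling out the parabolic alternative. Once this is in place, density reduces to two orbit-density statements (for $g_0^+$ and for $g_1^-$) combined with one reapplication of the existence argument to the sequence $g_1^N h'$.
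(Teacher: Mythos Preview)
Your argument is correct and follows essentially the same route as the paper's sketched proof: produce a collapsing sequence, arrange that the attractor and repeller are distinct, and observe that any single element $\phi$ satisfying $\phi(X\setminus O_r)\subset O_a$ (with $O_a,O_r$ disjoint and non-complementary) must be loxodromic; then for density, use minimality to move the attracting and repelling data into prescribed open sets and reapply the same reasoning to $\alpha\phi^n\beta^{-1}$ (your $g_1^N h'$ is exactly this with $\alpha=h$, $\beta=h'^{-1}h$). The only substantive difference is that the paper states ``this forces $\phi$ to be loxodromic'' without proof, while you supply a justification via a second application of the convergence property to $\langle\gamma_n\rangle$; this is a legitimate way to fill in that step, and your observation that the invariances $\gamma_n A\subset A$, $\gamma_n^{-1}R\subset R$ separate the two fixed points is exactly the point.
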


\begin{proof}[Sketched proof]
If $\Phi\subset \gC$ is a collapsing sequence with attracting and repelling points $a,r\in L(\gC)$, we may, up to multiplying $\Phi$ by some $\gc_0\in\gC$, suppose that $a\ne r$. Let $O_a,O_r$ be disjoint non-complementary open neighbourhoods of $a,r$ respectively, and pick $\phi\in\Phi$ with $\phi\cdot (X\setminus O_r)\subset O_a$. This forces $\phi$ to be loxodromic, proving the existence statement. 

To prove the density statement, let $\phi$ be an arbitrary loxodromic element and let $U,V\subset L(\gC)$ be disjoint non-complementary open sets. Since the action $\gC\act L(\gC)$ is minimal, there are $\ga,\gb\in\gC$ such that $\ga\cdot \phi^+\in U$ and $\gb\cdot \phi^-\in V$. Pick $n\in\BN$ sufficiently large so that $\phi^n\cdot (X\setminus \gb^{-1}V)\subset \ga^{-1}\cdot U$. Then we have
$\ga \phi^n \gb^{-1} \cdot (X\setminus V)\subset U$. This implies that $g=\ga \phi^n \gb^{-1}$ is loxodromic with $(g^-,g^+)\in V\times U$.
\end{proof}


\begin{lem}\label{lem:placement}
Let $\gC\act X$ be a minimal non-elementary convergence action. Given a proper closed subset $\gS\subset X$ and an open subset $O\subset X$ there is an element $\gd\in\gC$ for which $\gd\cdot\gS\subset O$.  
\end{lem}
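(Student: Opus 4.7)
The strategy is to take $\delta$ to be a high power of a suitably chosen loxodromic element $g$: one whose attracting fixed point $g^+$ lies inside $O$ and whose repelling fixed point $g^-$ lies outside $\Sigma$. The north--south dynamics of $g$ recalled just before the lemma will then force $g^n\cdot\Sigma\subset O$ for all sufficiently large $n$.

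More precisely, I first dispose of the trivial case $O=\emptyset$ (which forces $\Sigma=\emptyset$) and otherwise use the minimality of $\gC\act X$ to identify $X$ with $L(\gC)$, so that both $O$ and the open complement $X\setminus\gS$ are nonempty open subsets of the limit set. By the density statement in Lemma~\ref{lem:lox}, the set of pairs $(h^-,h^+)$ arising from loxodromic $h\in\gC$ is dense in $L(\gC)^2$, so one can choose a loxodromic $g\in\gC$ with $g^-\in X\setminus\gS$ and $g^+\in O$ simultaneously.

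Since $\gS$ is closed and $g^-\notin\gS$, I can then shrink $X\setminus\gS$ to an open neighbourhood $R$ of $g^-$ still disjoint from $\gS$, ensuring $\gS\subset X\setminus R$. Setting $A:=O$ as the open neighbourhood of $g^+$ and invoking the convergence property for $g$ (``for any neighbourhoods $g^-\in R,\ g^+\in A$ there is $n_0$ with $g^n\cdot(X\setminus R)\subset A$ for all $n\ge n_0$''), I conclude that $g^{n_0}\cdot\gS\subset O$, so $\gd:=g^{n_0}$ works.

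There is no real obstacle here; the proof is essentially mechanical once Lemma~\ref{lem:lox} is available. The only step requiring a moment's care is checking that the density statement can be applied to the two prescribed open sets, which is exactly what the reduction to $X=L(\gC)$ via minimality delivers.
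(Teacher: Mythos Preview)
Your proposal is correct and follows essentially the same approach as the paper's proof: pick a loxodromic $g$ with $g^+\in O$ and $g^-\notin\gS$ via Lemma~\ref{lem:lox}, then take $\gd=g^n$ for large $n$. The paper compresses your argument into two sentences, taking the identification $X=L(\gC)$ (immediate from minimality) and the nonemptiness of $O$ for granted, but the substance is identical.
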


\begin{proof}
By Lemma \ref{lem:lox} there is a loxodromic element $g\in \gC$ with $g^+\in O$ and $g^-\notin \gS$. Thus for a sufficiently large $n$ we have $g^n\cdot \gS\subset O$.
\end{proof}

%

%

We will make use of the fact that:

\begin{prop}\label{lem:countable}
A convergence group is countable.
\end{prop}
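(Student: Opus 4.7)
The plan is to use the first, equivalent formulation of the convergence property mentioned at the start of Section~\ref{sec:convergence}: the action of $\gC$ on the space
$$
\gTh(X):=\{(x_1,x_2,x_3)\in X^3 : i\ne j\Rightarrow x_i\ne x_j\}
$$
is proper, in the sense that $\{g\in\gC:g\cdot K\cap K\ne\emptyset\}$ is finite for every compact $K\subset\gTh(X)$. Since $X$ is compact metrizable with more than two points (as required by Definition~\ref{defn:convergence}), the space $\gTh(X)$ is a non-empty, locally compact, second countable Hausdorff space, and hence also $\sigma$-compact. Both features will be used.

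First I would fix a base triple $\xi_0\in\gTh(X)$. Properness applied to the compact set $\{\xi_0\}$ shows that the stabilizer $\gC_{\xi_0}$ is finite. (Alternatively, a direct application of Definition~\ref{defn:convergence} to an infinite subset of $\gC_{\xi_0}$ would force at least two of the three distinct coordinates of $\xi_0$ --- the ones not equal to the repelling point --- to be moved into any prescribed neighbourhood of the attracting point, yet they are fixed, a contradiction.) Next, using $\sigma$-compactness, I would cover $\gTh(X)=\bigcup_{n\in\BN}K_n$ by compact sets, enlarging each $K_n$ if necessary so that $\xi_0\in K_n$. For each $n$ the set $A_n:=\{g\in\gC:g\cdot\xi_0\in K_n\}$ is finite, since any such $g$ satisfies $g\cdot K_n\cap K_n\ne\emptyset$. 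Therefore the orbit $\gC\cdot\xi_0=\bigcup_n\{g\cdot\xi_0 : g\in A_n\}$ is a countable union of finite sets, and so $|\gC|=|\gC_{\xi_0}|\cdot|\gC\cdot\xi_0|\le\aleph_0$.

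The only conceptual step is the reduction to a proper action on $\gTh(X)$; from there the argument is the classical observation that a group acting properly discontinuously on a locally compact, second countable Hausdorff space is automatically countable. The main obstacle, if one prefers to start from Definition~\ref{defn:convergence} rather than the triple formulation, is to verify their equivalence --- namely that an infinite collapsing subfamily sending a triple into a compact subset of $\gTh(X)$ forces two limit coordinates to coincide with the attracting point, contradicting distinctness. This is treated in \cite{Bowditch:Convergence}.
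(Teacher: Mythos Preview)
Your argument is correct, but it takes a different route from the paper's. You pass through the equivalent formulation via the proper action on $\gTh(X)$, and then invoke the general principle that a group acting properly on a locally compact, second countable (hence $\sigma$-compact) space is countable: finite stabilizer times countable orbit. The paper instead argues directly from Definition~\ref{defn:convergence} by contradiction. Fixing three distinct target points $a,b,c$ and a countable dense set $D\subset X$, it observes that if $\gC$ were uncountable then a pigeonhole argument over $D^3$ produces a single triple $(x,y,z)\in D^3$ and a sequence of distinct $g_n\in\gC$ with $g_n\cdot(x,y,z)\to(a,b,c)$ in $X^3$; but any collapsing subsequence would have to send at least two of $x,y,z$ toward a common attracting point, which is impossible since $a,b,c$ are distinct. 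Your version is cleaner and more conceptual once one grants the equivalence with properness on triples (which, as you note, is in \cite{Bowditch:Convergence}); the paper's version is more self-contained relative to its own Definition~\ref{defn:convergence} and avoids that external reference.
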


\begin{proof}
Let $\gC\act X$ be a convergence action on a metrisable compact space $X$. Pick three distinct points $a,b,c\in X$ and a countable dense set $D\subset X$. Suppose by way of contradiction that $\gC$ is uncountable. Since for every $g\in\gC$, $g\cdot D$ is dense, and $X$ is second countable, a pigeon hall type argument implies that for some $x,y,z\in D$, the set 
$$
 \{ g\in\gC: g\cdot x\in U_a, g\cdot y\in U_b,g\cdot z\in U_c\}
$$ 
is uncountable and in particular non-empty for any triple of open sets $a\in U_a,b\in U_b,c\in U_c$. Thus we can produce a sequence of distinct elements $g_n\in \gC$ such that $g_n\cdot (x,y,z)\to (a,b,c)$ in $X^3$, in contrast to the convergence property.
\end{proof}

The following proposition is essential for our purpose: 

\begin{prop}[Main Proposition]\label{prop:main}
Let $\gC\act X$ be a minimal non-elementary convergence action. Then for every non-torsion element $\gc\in\gC$ there are two sets $\gO^+,\gO^-$ such that $\gc^n\cdot (X\setminus \gO^-)\subset \gO^+,~\forall n\in\BN$ and the complement $X\setminus (\gO^-\cup \gO^+)$ has a non-empty interior.
\end{prop}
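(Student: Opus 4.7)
The plan is to set
\[
\gO^- \;=\; X \setminus \bigcup_{n \geq 0} \gc^n V, \qquad \gO^+ \;=\; X \setminus V,
\]
where $V \subset X$ is a nonempty open set chosen to be \emph{wandering} under $\gc$, in the sense that $\gc^n V \cap V = \emptyset$ for every $n \ne 0$. Granting such a $V$, the verification is a direct set-theoretic computation. Since $X \setminus \gO^- = \bigcup_{m \geq 0} \gc^m V$, we have
\[
\gc^n(X \setminus \gO^-) \;=\; \bigcup_{k \geq n} \gc^k V \;\subset\; X \setminus V \;=\; \gO^+
\]
for every $n \geq 1$, using the wandering property for $k \geq 1$; and
\[
X \setminus (\gO^- \cup \gO^+) \;=\; \Bigl(\bigcup_{m \geq 0} \gc^m V\Bigr) \cap V \;=\; V,
\]
which is open and nonempty, hence certainly has nonempty interior.

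The real content is constructing $V$. First I would pick a basepoint $x_0 \in X \setminus \{\gc^+, \gc^-\}$, which exists because $X = L(\gC)$ has more than two points by non-elementarity. Since $\gc$ has infinite order with only $\gc^\pm$ as fixed points, the orbit $\{\gc^n x_0\}_{n \in \BZ}$ is injective, and by the convergence dynamics $\gc^n x_0 \to \gc^+$ as $n \to +\infty$ and $\gc^n x_0 \to \gc^-$ as $n \to -\infty$ (both limits coinciding in the parabolic case). Consequently, for any open $V_0 \ni x_0$ whose closure avoids $\{\gc^+, \gc^-\}$, we have $\gc^n V_0 \cap V_0 = \emptyset$ for all but finitely many $n$.

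The finitely many remaining nonzero $n$ are disposed of by Hausdorff separation: for each such $n$, the points $x_0$ and $\gc^n x_0$ are distinct, so they admit disjoint open neighborhoods, and continuity of $\gc^n$ lets us shrink $V_0$ to an open $V_n \ni x_0$ with $\gc^n V_n \cap V_n = \emptyset$. Intersecting the $V_n$ over the finite bad index range (together with $V_0$) yields the desired $V$. I expect this wandering-neighborhood construction to be the only nontrivial step; uniformity between the loxodromic and parabolic cases follows from $(\gc^{-1})^\pm = \gc^\mp$, so the forward and backward convergence of the orbit work symmetrically in both regimes, and the two-regime (large $|n|$ vs.\ finite bad window) argument applies without modification.
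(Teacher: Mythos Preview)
Your proof is correct and takes a genuinely different route from the paper's.

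The paper argues via Baire category: fixing a non-dense neighborhood $A$ of $\gc^+$, it forms the closed exhaustion $A_n=\{x:N(\gc,\overline{A},x)\le n\}$ of $X\setminus\{\gc^-\}$, notes that $A_{n+1}=\gc^{-1}A_n$ so the shells $A_i\setminus A_{i-1}$ are mutually homeomorphic, and applies Baire to force $A_2\setminus A_1$ to have nonempty interior; then $\gO^+=A_1$, $\gO^-=X\setminus A_2$. By contrast, you first manufacture a $\gc$-wandering open set $V$ directly from the collapsing dynamics (large $|n|$) together with Hausdorff separation (the finite bad window), and then read off $\gO^\pm$ from $V$ by a set-theoretic computation. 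This is more elementary---no Baire category is needed---and it actually inverts the paper's logical flow: the paper \emph{deduces} the existence of a $\gc$-wandering open set for non-torsion $\gc$ from this proposition (see the proof of Corollary~\ref{cor:open-wandering}), whereas you establish the wandering open set first and derive the proposition from it. The paper's $\gO^+=A_1$ is forward $\gc$-invariant, which your $\gO^+=X\setminus V$ need not be, but that extra structure is never used downstream.
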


\begin{proof}
Let $A$ be a neighbourhood of $\gc^+$ which is not dense in $X$.
Set 
$$
 A_n:=\{ x\in X:N(\gc,\overline{A},x)\le n\}.
$$ 
Then 
\begin{itemize}
\item the $A_n$ are closed,
\item $A_n\subset A_{n+1}$,
\item $\cup_n A_n\supset X\setminus\{ \gc^-\}$, and
\item $A_{n+1}=\gc^{-1}\cdot A_n$.
\end{itemize}
It follows from the Baire category theorem that there is a minimal $n\in\BN$ such that $A_n\setminus A_1$ has a non-empty interior. 
Then $A_n\setminus A_{n-1}$ has a non-empty interior. However, as the sets $A_i\setminus A_{i-1}$ are all homeomorphic, we deduce that $n=2$.
Thus we may take 
$$
 \gO^+=A_1~\text{and}~\gO^-=X\setminus A_2.
$$
\end{proof}

Let us say that a group action $\gC\act X$ on a topological space is {\it generically free} if the fixed point set of any non-trivial element (an element not belonging to the kernel) is nowhere-dense. As Shahar Mozes pointed out to me, it is easy to construct an example of a non-elementary discrete subgroup of $\text{Aut}(T)$, where $T$ is a $4$-regular tree with a non-trivial elliptic element that fixes an open set in the boundary $\partial(T)$. The next proposition shows that this cannot happen for minimal convergence actions:

\begin{prop}
A non-elementary minimal convergence action is generically free.
\end{prop}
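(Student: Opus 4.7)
The plan is to argue by contradiction: suppose $g\in\gC\setminus\ker(\act)$ but that $U:=\operatorname{int}(\operatorname{Fix}(g))\ne\emptyset$, and manufacture a sequence of conjugates of $g$ that violates the convergence property. First I would fix any point $p\in X$ and, using metrisability, a decreasing sequence of open neighbourhoods $O_n$ of $p$ with $\bigcap_n O_n=\{p\}$. Because $\gS:=X\setminus U$ is a proper closed subset of $X$ (proper since $U\ne\emptyset$), Lemma~\ref{lem:placement} furnishes elements $\gd_n\in\gC$ with $\gd_n\cdot\gS\subset O_n$, equivalently $\gd_n U\supset X\setminus O_n$. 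Setting $g_n:=\gd_n g\gd_n^{-1}$, each $g_n$ pointwise fixes the large set $X\setminus O_n$.

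Next, I would split on whether the sequence $(g_n)$ contains infinitely many distinct elements. If not, pigeonhole yields a single element $h$ with $h=g_{n_k}$ for infinitely many indices $k$. Each such $g_{n_k}$ fixes $X\setminus O_{n_k}$, so $h$ fixes $\bigcup_k(X\setminus O_{n_k})=X\setminus\{p\}$, a dense set, and therefore by continuity $h$ fixes all of $X$; hence $h\in\ker(\act)$. Since $h$ is conjugate to $g$ and $\ker(\act)$ is normal, this forces $g\in\ker(\act)$, contradicting our assumption.

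If instead infinitely many $g_n$ are distinct, the convergence property provides a collapsing subsequence $(g_{n_k})$ with attracting point $a$ and repelling point $r$. Since $X=L(\gC)$ is perfect (in particular infinite) under the non-elementary minimal hypothesis, I can choose $x\in X\setminus\{p,a,r\}$; for $k$ large we have $x\notin O_{n_k}$, so $g_{n_k}\cdot x=x$. Taking $K=L=\{x\}$, these are compact subsets of $X\setminus\{r\}$ and $X\setminus\{a\}$ respectively, yet $g_{n_k}\cdot K\cap L=\{x\}\ne\emptyset$ for all large $k$, contradicting the definition of a collapsing sequence.

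The principal—and rather mild—obstacle is keeping careful track of the point $p$ relative to the a priori unknown attracting and repelling points $a,r$ of the collapsing subsequence; this is precisely what requires using that $L(\gC)$ is perfect, guaranteeing that some $x\in X\setminus\{p,a,r\}$ is always available. Everything else is a routine interplay of Lemma~\ref{lem:placement}, pigeonhole, and the convergence property.
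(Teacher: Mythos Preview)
Your argument is correct, but it follows a genuinely different line from the paper's. The paper uses the placement lemma to find a \emph{single} element $\gd$ with $\gd^{-1}\cdot A\subsetneq A^\circ$, where $A=\operatorname{Fix}(\gc)$; iterating gives a strictly increasing chain $A\subsetneq\gd\cdot A\subsetneq\gd^2\cdot A\subsetneq\cdots$ of fixed-point sets of the conjugates $\gc_n=\gc^{\gd^n}$, forcing the $\gc_n$ to be pairwise distinct. Since all of them fix $A$ pointwise and $A^\circ$ contains three distinct points (perfectness of $X$), this contradicts the finiteness of the pointwise stabiliser of a triple---which is the form of the convergence property the paper invokes. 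By contrast, you use infinitely many $\gd_n$ to push the \emph{complement} of the fixed set into a shrinking basis at a point $p$, and then appeal to the collapsing definition directly, at the cost of a case split. The paper's route is a bit shorter and avoids the bifurcation; your route has the virtue of using only the raw collapsing definition rather than the (equivalent) triple-stabiliser formulation. One small bookkeeping point worth making explicit in your Case~2: when you extract the collapsing subsequence you should first pass to an infinite set of indices on which the $g_n$ are pairwise distinct, so that the indices $n_k$ of the collapsing subsequence can be taken to tend to infinity---this is what guarantees $x\notin O_{n_k}$ eventually.
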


\begin{proof}
We will argue by way of contradiction.
Suppose that $\gC$ admits a minimal non-elementary convergence action $\gC\act X$ which is not generically free, and let $\gc\in\gC$ be an element whose fixed point set $A=\text{Fix}(\gc)$ is a proper closed subset with a non-empty interior $A^\circ$. 
In view of Lemma \ref{lem:placement} there is an element $\gd\in\gC$ such that $\gd^{-1}\cdot A\varsubsetneq A^\circ$. This implies that $\gd^{n-1}\cdot A\varsubsetneq \gd^{n}\cdot A$, for every $n\in \BN$. Let $\gc_n=\gc_{n-1}^\gd=\gc^{\gd^n}$, then $\text{Fix}(\gc_n)=\gd^n\cdot A$ form a strictly increasing sequence of closed sets, and in particular the set $\{\gc_n:n\in\BN\}$ is infinite.  
On the other hand, since $X$ is perfect, the non-empty open set $A^\circ$ contains $3$ distinct points, and as the action is convergence, its pointwise stabiliser is finite. A contradiction.
\end{proof}

In view of Baire's category theorem we deduce:

\begin{cor}\label{cor:wandering}
Let $\gC\act X$ be a non-elementary minimal convergence action, then there is a point $x\in X$ with a trivial stabiliser.
\end{cor}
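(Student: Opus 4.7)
The plan is to combine the preceding generic-freeness proposition with the countability of $\gC$ (Proposition \ref{lem:countable}) via a one-shot application of the Baire category theorem. Since $X$ is a compact metrisable space, it is a Baire space, which is the ambient setting needed for the argument.

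First I would assemble the ``bad'' set of points whose stabiliser is strictly larger than the kernel $K:=\ker(\act)$:
$$
 B:=\bigcup_{\gc\in\gC\setminus K}\text{Fix}(\gc),\qquad \text{Fix}(\gc):=\{x\in X:\gc\cdot x=x\}.
$$
The indexing set is countable by Proposition \ref{lem:countable}. Each $\text{Fix}(\gc)$ is closed in $X$, because $\gc$ acts by a homeomorphism and $X$ is Hausdorff, and is nowhere-dense by the generic-freeness proposition just established. Hence $B$ is a countable union of closed nowhere-dense subsets, i.e.\ meagre, and the Baire category theorem yields that $X\setminus B$ is dense and in particular non-empty. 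Any point $x\in X\setminus B$ then satisfies $\text{Stab}_\gC(x)=K$, which is trivial in the faithful setting envisaged by the corollary. (If faithfulness is not presupposed, running the same argument for the induced action of $\gC/K$ gives the analogous statement modulo the kernel.)

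There is no real obstacle here: the corollary is a direct ``Baire-ifies-genericity'' consequence of the previous proposition together with countability of $\gC$. The only subtle points worth double-checking are the closedness of $\text{Fix}(\gc)$ (automatic from continuity and Hausdorffness) and the fact that ``non-trivial element'' in the previous proposition is interpreted as ``element outside the kernel,'' which is exactly what is needed to make the union over $\gc\in\gC\setminus K$ a union of \emph{nowhere-dense} sets.
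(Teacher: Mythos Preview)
Your proof is correct and matches the paper's intended argument exactly: the paper simply writes ``In view of Baire's category theorem we deduce'' before stating the corollary, and your proposal spells out precisely that deduction---countability of $\gC$ plus the preceding generic-freeness proposition make the union of fixed-point sets meagre. Your remark about the kernel is also on point, since the paper's definition of ``non-trivial element'' in the generic-freeness proposition is ``not in the kernel.''
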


For an element $\gc\in\gC$, let us say that a subset $\gS\subset X$ is $\gc$-wandering if all the $\langle\gc\rangle$-translations of $\gS$
are pairwise disjoint or, in other words,  if $\gS\cap\gc^n\cdot\gS=\emptyset$ whenever $n\notin \text{ord}(\gc)\BZ$.
 In particular, a singleton set is $\gc$-wandering iff the corresponding $\langle\gc\rangle$-orbit is faithful. Obviously a subset of a $\gc$-wandering set is also $\gc$-wandering. 

\begin{cor}\label{cor:open-wandering}
Let $\gC\act X$ be a faithful minimal convergence action. Then for every $\gc\in\gC$ there is a $\gc$-wandering open set $O=O(\gc)$ in $X$. 
\end{cor}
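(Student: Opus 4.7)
The plan is to split into two cases, depending on whether $\gc$ has finite or infinite order.

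For the torsion case, suppose $\gc$ has order $k$. If $k=1$ the $\gc$-wandering condition is vacuous, so I assume $k\ge 2$. Faithfulness ensures that each of $\gc,\gc^2,\ldots,\gc^{k-1}$ is a non-trivial homeomorphism of $X$, and by the preceding proposition on generic freeness each of their fixed point sets is nowhere dense. Invoking Baire's theorem, I pick a point $x\in X$ that is fixed by none of $\gc^j$ for $1\le j\le k-1$, so that $x,\gc x,\ldots,\gc^{k-1}x$ are $k$ pairwise distinct points. I then select pairwise disjoint open neighbourhoods $U_0,\ldots,U_{k-1}$ of these points and define
$$
O:=\bigcap_{j=0}^{k-1}\gc^{-j}U_j.
$$
This is an open neighbourhood of $x$ for which $\gc^j\cdot O\subset U_j$ whenever $0\le j\le k-1$, so the translates $O,\gc O,\ldots,\gc^{k-1}O$ are pairwise disjoint; since $\gc^k=e$, this yields $\gc^n\cdot O\cap O=\emptyset$ for every $n\notin k\BZ=\text{ord}(\gc)\BZ$.

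For the non-torsion case I apply the Main Proposition~\ref{prop:main} directly. It produces sets $\gO^+,\gO^-$ with $\gc^n\cdot(X\setminus\gO^-)\subset\gO^+$ for every $n\in\BN$ and with $X\setminus(\gO^+\cup\gO^-)$ having non-empty interior. I let $O$ be any non-empty open subset of this interior, so that $O\cap\gO^+=\emptyset$ and $O\subset X\setminus\gO^-$. Then $\gc^n\cdot O\subset\gO^+$ for every $n\ge 1$, which gives $O\cap\gc^n\cdot O=\emptyset$; replacing $n$ by $-n$ (or observing that the condition $O\cap\gc^n\cdot O=\emptyset$ is symmetric under $n\mapsto -n$) handles negative powers. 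Since $\text{ord}(\gc)\BZ=\{0\}$ in this case, $O$ is $\gc$-wandering.

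The non-torsion case is essentially immediate from the Main Proposition, so the only mild obstacle lies in the torsion case: there one must combine faithfulness (to guarantee that every non-identity power of $\gc$ acts non-trivially) with generic freeness and Baire to find a point whose first $k-1$ $\langle\gc\rangle$-translates avoid it, and then contract to a neighbourhood whose $\langle\gc\rangle$-translates inherit the separation. No additional dynamical input beyond these standard ingredients is required.
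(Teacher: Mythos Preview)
Your proof is correct and follows essentially the same strategy as the paper: split into torsion and non-torsion cases, handle the latter directly via the Main Proposition, and for the former use generic freeness plus Baire to find a point with faithful $\langle\gc\rangle$-orbit and then shrink to a suitable neighbourhood. The only cosmetic difference is that the paper packages the torsion-case point selection as Corollary~\ref{cor:wandering} (a point with trivial $\gC$-stabiliser) and leaves ``sufficiently small neighbourhood'' implicit, whereas you argue directly and write out the explicit intersection $O=\bigcap_j\gc^{-j}U_j$.
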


\begin{proof}
Suppose first that $\gc$ has infinite order and let $\gO^+,\gO^-\subset X$ be the corresponding sets given in Proposition \ref{prop:main}. By Proposition \ref{prop:main}, $X\setminus (\gO^+\cup\gO^-)$ is $\gc$-wandering and contains an open set.

Next consider the finite order case. By Corollary \ref{cor:wandering}, there is a $\gc$-wandering point $x\in X$. By continuity of the action, if $O$ is a sufficiently small neighbourhood of $x$ then $O$ is $\gc$-wandering.
\end{proof}

\section{The proof of the Theorem \ref{thm}}

Let us reformulate Theorem \ref{thm}:

\begin{thm}\label{thm:torsion}\label{thm:main}
Let $\gC$ be a group admitting a faithful minimal convergence action $\gC\act X$ with $\text{card}(X)> 2$. Then there is a set of representatives $I$ for the non-trivial conjugacy classes of $\gC$, such that $H=\langle I\rangle$ is the free product of the cyclic groups $\{\langle \gc_i\rangle\}_{\gc_i\in I}$ and is strictly contained in $\gC$. 
\end{thm}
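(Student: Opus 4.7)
My plan is to apply the Ping-Pong Lemma for infinite free products of cyclic groups. Since $\gC$ is countable by Proposition \ref{lem:countable}, its non-trivial conjugacy classes form an enumerable list $C_1, C_2, \ldots$. I also fix a loxodromic element $h \in \gC$ (furnished by Lemma \ref{lem:lox}) and a small open neighbourhood $W$ of $h^+$ with $h^- \notin \overline{W}$, reserved as the witness that the subgroup we construct is proper. Inductively I will select representatives $\gc_n \in C_n$ together with pairwise disjoint open sets $V_n$, all disjoint from $W$, such that, writing $V_0 := W$, one has $g \cdot V_j \subset V_i$ for every $i \geq 1$, every $j \neq i$, and every non-identity $g \in \langle \gc_i \rangle$. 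The ping-pong lemma will then yield $\langle I \rangle = \ast_n \langle \gc_n \rangle$. Moreover, any non-trivial $w \in \langle I \rangle$ sends $W = V_0$ into some $V_{i_1}$ disjoint from $W$, whereas a large power $h^N$ satisfies $h^N W \cap W \neq \emptyset$ (since $h^N x \to h^+ \in W$ for any $x \in W \setminus \{h^-\}$), forcing $h^N \notin \langle I \rangle$ and hence $\langle I \rangle \subsetneq \gC$.

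The inductive step is the heart of the proof. Suppose $\gc_1, \ldots, \gc_{n-1}$ and $V_0, V_1, \ldots, V_{n-1}$ have been chosen. For each $k < n$ I define the \emph{safe zone} $S_k \subset X$: when $\gc_k$ is non-torsion, $S_k := X \setminus (\gO_k^- \cup \widetilde{\gO}_k^-)$, where $\gO_k^-$ is the repelling set of Proposition \ref{prop:main} applied to $\gc_k$ and $\widetilde{\gO}_k^-$ the analogous one for $\gc_k^{-1}$; when $\gc_k$ has finite order $r_k$, $S_k$ is the wandering slot of $\gc_k$ (from Corollary \ref{cor:open-wandering}) whose non-trivial $\gc_k$-translates comprise $V_k$. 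Each $S_k$ is open, contains every $V_j$ with $j \neq k$ (by construction at the earlier stage that created the later of the two indices), and has non-empty interior, so $S := \bigcap_{k<n} S_k$ is an open set containing the entire structure built so far. Now fix any $\tilde\gc_n \in C_n$ together with its dynamical data (attracting sets from Proposition \ref{prop:main} if non-torsion, or a wandering set and its orbit from Corollary \ref{cor:open-wandering} if torsion). Using Lemma \ref{lem:placement}, possibly iteratively, choose $\gd_n \in \gC$ such that $\gc_n := \gd_n \tilde\gc_n \gd_n^{-1}$ satisfies: (a) all attracting regions of $\gc_n$ and $\gc_n^{-1}$ sit inside $S \setminus \overline{W \cup V_1 \cup \cdots \cup V_{n-1}}$; (b) in the torsion case of order $r$, a designated $\gc_n$-wandering open set $O_n$ contains $W \cup V_1 \cup \cdots \cup V_{n-1}$ while $\gc_n^m O_n \subset S$ for $m = 1, \ldots, r-1$.

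Finally, define $V_n := \gO_n^+ \cup \widetilde{\gO}_n^+$ in the non-torsion case and $V_n := \bigcup_{m=1}^{r-1} \gc_n^m O_n$ in the torsion case. The ``forward'' ping-pong inclusion $\gc_n^m V_j \subset V_n$ for $j < n$ and non-identity $\gc_n^m$ is then automatic---from Proposition \ref{prop:main} (non-torsion), since $V_j \subset S$ avoids the combined repelling set, or from $V_j \subset O_n$ (torsion). The ``reverse'' inclusion $\gc_k^m V_n \subset V_k$ for $k < n$ and non-identity $\gc_k^m$ follows because $V_n \subset S \subset S_k$. The main obstacle is engineering a single conjugator $\gd_n$ which realizes the simultaneous placements (a)--(b); this is delicate in the torsion case with $r \geq 3$, where $O_n$ must be large enough to absorb the growing structure yet small enough that its $r-1$ non-trivial translates still fit inside every safe zone. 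The remedy is to exploit (i) the flexibility in Proposition \ref{prop:main} to enlarge the attracting sets while shrinking the repelling sets so that each $S_k$ is as large as desired, and (ii) the iterated use of Lemma \ref{lem:placement} with ever-smaller targets as $n$ grows. Together these provide the elbow room to drive the induction through all conjugacy classes.
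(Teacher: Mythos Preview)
Your outline contains a real gap that you yourself flag as ``the main obstacle'': in the torsion case with order $r\ge 3$ you need a single conjugator $\gd_n$ so that the wandering slot $O_n$ simultaneously \emph{swallows} $\overline{W\cup V_1\cup\cdots\cup V_{n-1}}$ and has all of its $r-1$ nontrivial $\gc_n$-translates land inside the shrinking intersection $S=\bigcap_{k<n}S_k$. Your remedies (i) and (ii) do not actually resolve this. Remedy (i) speaks of enlarging the safe zones $S_k$, but those were fixed at earlier stages and cannot be retroactively adjusted; and in the non-torsion case Proposition~\ref{prop:main} produces $\gO^+=A_1$ and $\gO^-=X\setminus A_2$ with $A_2=\gc^{-1}A_1$, so making $\gO^-$ small forces $\gO^+$ (and hence $V_k$) large, which then conflicts with fitting $V_k$ into the earlier safe zones. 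Remedy (ii) invokes iterated placement, but a single $\gd_n$ must satisfy conditions pulling in opposite directions (on $\gd_n$ and on $\gd_n^{-1}$), and Lemma~\ref{lem:placement} gives no control of that kind.

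The paper sidesteps the entire induction. The missing ingredient is Lemma~\ref{lem:wandering}: for \emph{any} proper closed set $\gS\subset X$ and \emph{any} $\gc\in\gC$, there is a conjugate $\gc^\gd$ for which $\gS$ is $\gc^\gd$-wandering. (Proof: take a $\gc$-wandering open $O$ from Corollary~\ref{cor:open-wandering} and use Lemma~\ref{lem:placement} to push $\gS$ into $O$.) Once you have this, choose the ping-pong domains \emph{first}: fix pairwise disjoint open sets $\gO_n$ whose union is not dense, and then for each $n$ independently pick $\gc_n\in C_n$ so that $X\setminus\gO_n$ is $\gc_n$-wandering. Proposition~\ref{prop:ping-pong} applies directly, and $L(\langle\gc_n\rangle)\subset\overline{\bigcup\gO_n}\subsetneq X$ gives properness. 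There is no coupling between the choices of $\gc_n$, no safe zones, and no distinction between the torsion and non-torsion cases; the sets $V_n$ you were trying to build out of the dynamics of $\gc_n$ are simply replaced by the pre-chosen $\gO_n$.
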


\begin{rem}
In fact, for any function 
$$
 F:\{ \text{non-trivial conjugacy classes of}~\gC\}\to \BN\cup\{\infty\},
$$
one can produce an independent set $I_F$ which intersects each non-trivial conjugacy class $C$ exactly $F(C)$ times.
\end{rem}

%


\begin{lem}\label{lem:wandering}
Let $\gC\act X$ be a minimal faithful convergence action. Given an element $\gc\in\gC$ and a proper closed subset $\gS\subset X$, there is an element $\gd=\gd(\gc,\gS)$ for which $\gS$ is $\gc^\gd$-wandering.
\end{lem}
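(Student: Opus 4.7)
The plan is to reduce the statement to the existence of a $\gamma$-wandering open set, which is already provided by Corollary \ref{cor:open-wandering}, and then use Lemma \ref{lem:placement} to transport $\gS$ into such an open set via a group element. The conjugation will then automatically translate the wandering property back to $\gS$ itself.

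More precisely, recall that $\gc^\gd = \gd\gc\gd^{-1}$ (consistent with the convention used in the paper, e.g.\ in the proof that minimal non-elementary convergence actions are generically free, where $\text{Fix}(\gc^\gd) = \gd\cdot\text{Fix}(\gc)$). Thus
\[
 (\gc^\gd)^n \cdot \gS = \gd\gc^n\gd^{-1}\cdot\gS,
\]
and therefore $\gS$ is $\gc^\gd$-wandering if and only if $\gd^{-1}\cdot\gS$ is $\gc$-wandering. So it suffices to find $\gd\in\gC$ such that $\gd^{-1}\cdot\gS$ is $\gc$-wandering.

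First I would invoke Corollary \ref{cor:open-wandering} to obtain a non-empty $\gc$-wandering open set $O = O(\gc)\subset X$. Next, since $\gS$ is a proper closed subset of $X$ and $O$ is a non-empty open subset, Lemma \ref{lem:placement} furnishes an element $\gep\in\gC$ with $\gep\cdot\gS\subset O$. Setting $\gd:=\gep^{-1}$, we get $\gd^{-1}\cdot\gS = \gep\cdot\gS\subset O$. Since any subset of a $\gc$-wandering set is $\gc$-wandering, $\gd^{-1}\cdot\gS$ is $\gc$-wandering, and by the equivalence above, $\gS$ is $\gc^\gd$-wandering.

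There is essentially no obstacle here: the argument is a direct combination of Lemma \ref{lem:placement} and Corollary \ref{cor:open-wandering}, together with the tautological observation that pushing $\gS$ into a $\gc$-wandering region is equivalent, under conjugation, to making $\gS$ itself wandering under the conjugate of $\gc$. The only point requiring care is the bookkeeping of the conjugation convention, which determines whether one chooses $\gd$ or $\gd^{-1}$ as the element supplied by Lemma \ref{lem:placement}.
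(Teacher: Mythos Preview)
Your argument is correct and coincides with the paper's proof: both invoke Corollary \ref{cor:open-wandering} to obtain a $\gc$-wandering open set $O$, then apply Lemma \ref{lem:placement} to push $\gS$ into $O$, and conclude by the equivariance of the wandering property under conjugation. The only cosmetic difference is that the paper names the element from Lemma \ref{lem:placement} $\gd$ and asserts directly that $\gS$ is $\gd^{-1}\gc\gd$-wandering, whereas you keep track of the inverse explicitly.
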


\begin{proof}
Let $O$ be a $\gc$-wandering open set given by Corollary \ref{cor:open-wandering}.
Let $\gd\in\gC$ be an element satisfying $\gd\cdot \gS\subset O$ (see Lemma \ref{lem:placement}). Then $\gS$ is $\gd^{-1}\gc\gd$-wandering.
\end{proof}

The following 
proposition is most suitable in our situation:

\begin{prop}\label{prop:ping-pong}
Suppose that $\ga_n,~n=1,2,\ldots$ are elements of $\gC$ and $\gO_n$ are pairwise disjoint sets, such that $X\setminus \gO_n$ is $\ga_n$-wandering for $n=1,2,\ldots$. Then 
$$
 \gD:=\langle\ga_n,~n=1,2,\ldots\rangle=*_n\langle\ga_n\rangle
$$ 
is the free product of the cyclic groups $\langle\ga_n\rangle,~n=1,2,\ldots$.

Moreover the limit set $L(\gD)$ is contained in $\overline{\cup_n\gO_n}$.
\end{prop}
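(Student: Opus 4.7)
The plan is to run a Klein-style ping-pong argument, using the pairwise disjointness of the $\gO_n$ to detect non-trivial reduced words. The key reformulation of the hypothesis is that $X\setminus\gO_n$ being $\ga_n$-wandering is equivalent to
\[
 \ga_n^k\cdot(X\setminus\gO_n)\subset\gO_n\quad\text{for every }k\notin\text{ord}(\ga_n)\BZ,
\]
because the wandering condition forces $\ga_n^k\cdot(X\setminus\gO_n)$ to be disjoint from $X\setminus\gO_n$. Combined with the pairwise disjointness of the $\gO_n$, this gives $\ga_n^k\cdot\gO_m\subset\gO_n$ for any $m\neq n$ and any non-trivial power.

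For the free product claim, let $w=\ga_{i_1}^{k_1}\cdots\ga_{i_\ell}^{k_\ell}$ be a non-trivial reduced word in $\gD$, so $i_j\neq i_{j+1}$ and each $k_j\notin\text{ord}(\ga_{i_j})\BZ$. Since the index set is infinite I may pick $m\notin\{i_1,i_\ell\}$ with $\gO_m\neq\emptyset$; the latter condition is automatic whenever $\ga_m\neq 1$, since $\gO_m=\emptyset$ would make $X$ itself $\ga_m$-wandering and force $\ga_m=1$. Choose $x\in\gO_m$. A downward induction on $j$ from $\ell$ to $1$, using the observation above together with $\gO_{i_{j+1}}\subset X\setminus\gO_{i_j}$, shows that $\ga_{i_j}^{k_j}\cdots\ga_{i_\ell}^{k_\ell}\cdot x\in\gO_{i_j}$. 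In particular $w\cdot x\in\gO_{i_1}$, which is disjoint from $\gO_m\ni x$, so $w\cdot x\neq x$ and $w\neq 1$ in $\gD$.

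For the limit set inclusion, take $a\in L(\gD)$. By the convergence property of $\gD\act X$, some sequence of distinct $\gc_s\in\gD$ is collapsing to $a$ with a repellor $r$, so $\gc_s\cdot y\to a$ for every $y\neq r$. Writing $\gc_s=\ga_{i_1(s)}^{k_1(s)}\cdots\ga_{i_{\ell_s}(s)}^{k_{\ell_s}(s)}$ in reduced form, the same ping-pong computation shows that whenever $y\notin\gO_{i_{\ell_s}(s)}$ one has $\gc_s\cdot y\in\gO_{i_1(s)}\subset\bigcup_n\gO_n$. Because each point of $X$ lies in at most one $\gO_m$, it remains to find $y\neq r$ meeting this condition for infinitely many $s$: if the terminal indices $i_{\ell_s}(s)$ are not eventually constant then any $y\neq r$ works, while otherwise $i_{\ell_s}(s)=m_0$ for large $s$ and we replace $y$ by any point of $X\setminus(\gO_{m_0}\cup\{r\})$; the only way no such point exists forces $\gD$ to be cyclic, in which case the claim reduces directly to the attractor of $\ga_{m_0}$ being in $\overline{\gO_{m_0}}$ by the same wandering argument. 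Passing to that subsequence, $a=\lim\gc_s\cdot y\in\overline{\bigcup_n\gO_n}$.

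The main obstacle is the bookkeeping in the limit-set step, namely guaranteeing a starting point $y\neq r$ that avoids the $\gO$ corresponding to the final letter of infinitely many $\gc_s$; the ping-pong calculation itself is the standard one.
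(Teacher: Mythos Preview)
Your argument is correct and is the same ping-pong strategy the paper uses, with one organizational difference worth noting. The paper explicitly assumes, ``to keep the argument clean,'' that $X\setminus\bigcup_n\gO_n$ is non-empty and takes the base point $x$ there (this holds in the only application, where the $\gO_n$ are chosen with non-dense union). With that choice the limit-set statement becomes a one-liner: every non-trivial $\gc\in\gD$ sends $x$ into $\bigcup_{i,j}\ga_i^j\cdot(X\setminus\gO_i)\subset\bigcup_i\gO_i$, so every limit of $\gc_n\cdot x$ lies in $\overline{\bigcup_n\gO_n}$. You instead take the base point inside some $\gO_m$ with $m\notin\{i_1,i_\ell\}$, which lets you avoid the extra hypothesis but forces the case analysis on the terminal index $i_{\ell_s}$ in the limit-set step. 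Both routes are fine; yours is more general, the paper's is shorter.

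One small point: your choice of $m\notin\{i_1,i_\ell\}$ with $\gO_m\ne\emptyset$ tacitly needs at least three non-trivial generators. When only one or two $\ga_n$ are non-trivial the statement is easy (cyclic, or handle a word starting and ending in different letters by conjugating so it starts and ends in the same one), so this is a cosmetic gap rather than a real one.
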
 

\begin{proof}
The proof of the first statement is by a standard ``ping-pong argument". To keep the argument clean let us suppose that $X\setminus\cup_n\gO_n$ is non-empty and contains the point $x$. For $i\in\BN$ and $j\in \BZ$ which is not a multiple of $\text{ord}(\ga_i)$, let $X_i^j=\ga_i^j\cdot(X\setminus\gO_i)$ and observe that by the assumptions of the proposition the sets $X_i^j$ are pairwise disjoint. We claim that for $\ga\in\gD$ we can recover the unique reduced expression of $\ga$ as a word in the $\ga_i$'s by a recursive algorithm based on the position of $\ga\cdot x$. Indeed, if $\ga=\ga_{i_1}^{j_1}\ga_{i_2}^{j_2}\cdots\ga_{i_k}^{j_k}$ then $\ga\cdot x\in X_{i_1}^{j_1}$, and $\ga_{i_1}^{-j_1}\ga\cdot x\in X_{i_2}^{j_2}$, etc.

In order to show the second statement recall that $L(\gD)$ is the set of limits $\lim_n\gc_n\cdot x$ where $(\gc_n)$ runs over all sequences in $\gD$ for which the limit exists. Since for every $\gc\in \gD$, $\gc\cdot x\in \cup_{i,j} X_i^j\subset \cup_i\gO_i$ the statement follows.

\end{proof}

\begin{proof}[The proof of Theorem \ref{thm:torsion}]
Let $\gC\act X$ be a faithful minimal convergence group action. Let $\{C_n\}_{n\in\BN}$ be the non-trivial conjugacy classes of $\gC$, and let $\gO_n,~n\in\BN$ be arbitrary pairwise disjoint open subsets of $X$ whose union is not dense in $X$. In view of Lemma \ref{lem:wandering} we can pick a representative $\gc_n\in C_n$ for each $n$, such that $X\setminus\gO_n$ is $\gc_n$-wandering. By Proposition \ref{prop:ping-pong} the set $\{\gc_n\}$ is independent, i.e. $\gD:=\langle\gc_n:n\in\BN\rangle$ is the free product $*_n\langle \gc_n\rangle$. Finally, since $\cup\gO_n$ is not dense, the limit set of $\gD$ is a proper subset of $X$ and therefore $\gD$ is a proper subgroup of $\gC$. 
\end{proof}

{\it Acknowledgement:} I would like to thank Uri Bader for a valuable remark that enabled removing an unnecessary assumption that appeared in the first manuscript of this paper.

\end{document}